\documentclass[psamsfont,a4paper,12pt]{amsart}
\usepackage[english]{babel}
\numberwithin{equation}{section}

\usepackage{setspace}
\usepackage{enumerate}
\usepackage{afterpage}
\usepackage[usenames,dvipsnames]{color}
\usepackage{graphicx}

\usepackage{amssymb,latexsym,amsmath}
\usepackage{amssymb}
\usepackage{mathrsfs}
\usepackage{amsfonts}
\usepackage{latexsym}
\usepackage[latin1]{inputenc}
\usepackage{pstricks,multido,pst-plot}
\usepackage[none]{hyphenat}

\usepackage{amsthm}
\usepackage{stackrel} 

\usepackage{tabularx}
\usepackage{longtable}

\usepackage{verbatim}
\usepackage{blindtext}
\usepackage{multicol,fancyheadings}
\usepackage{float}
\usepackage[vcentermath,enableskew]{youngtab}
\usepackage{subfig}

\allowdisplaybreaks

\newtheorem{theorem}{Theorem}[section]			

\theoremstyle{definition}

\newtheorem{example}{Example}[section]

\theoremstyle{remark}

\usepackage{tikz}
\usetikzlibrary{arrows}

\usepackage{paralist}
\usepackage{cite}

\title{Non-standard Zeckendorf decompositions; or, Tribonacci within Fibonacci}

\author[Anders]{Katie Anders}
\address{Department of Mathematics, University of Texas at Tyler, Tyler, TX 75799}
\email{kanders@uttyler.edu}

\author[Dawsey]{Madeline L. Dawsey}
\address{Department of Mathematics, University of Texas at Tyler, Tyler, TX 75799}
\email{mdawsey@uttyler.edu}

\author[Vandehey]{Joseph Vandehey}
\address{Department of Mathematics, University of Texas at Tyler, Tyler, TX 75799}
\email{jvandehey@uttyler.edu}

\keywords{digital representations, Zeckendorf decomposition, Fibonacci numbers}

\begin{document}


\begin{abstract}
We study $B(n;k)$, the number of ways of writing $n$ as a sum or difference of the first $k$ Fibonacci numbers. We show that $B(0;k)$ satisfies the Tribonacci-like recurrence $B(0;k+1)=B(0;k)+B(0;k-1)+B(0;k-2)$ and that $B(n;k)$ satisfies a modified version of this recurrence.
\end{abstract}

\maketitle

\section{Introduction}\label{introduction}

The Fibonacci numbers $F_n$ satisfy the standard recurrence $F_{i}=F_{i-1}+F_{i-2}$ for $i\ge 3$, with initial values $F_1=1$ and $F_2=1$. The Zeckendorf decomposition of a positive integer $n$ is a representation of $n$ as a sum of non-consecutive Fibonacci numbers:
\[
n=F_{i_1}+F_{i_2}+\dots +F_{i_r},
\]
where $i_j\ge i_{j+1}+2$ for $j=1,2,\dots, r-1$, and $i_r\ge 1$.  This representation exists for all $n\ge 1$ and is unique \cite{zeckendorf1972representations}; however, if we relax the condition that $i_j\ge i_{j+1}+2$, then it is possible to have multiple representations of a given integer. 

One can define the function $R(N)$ to denote the number of ways to write $N$ as a sum of distinct Fibonacci numbers\footnote{For technical reasons, this means that $F_1$ and $F_2$ are considered the same and so $F_1+F_3$ and $F_2+F_3$ would be considered the same representation.}. Given the Zeckendorf decomposition, we see that $R(N)\ge 1$ for all $N\ge 1$, and $R(0)$ is said to be $1$ to account for the empty sum. The sequence of values of $R(N)$ is given by 
\[
1, 1, 1, 2, 1, 2, 2, 1, 3, 2, 2, 3, 1, 3, 3, 2, 4, 2, 3, 3, 1, 4, 3, 3, 5, 2, 4, 4, 2, 5, 3, 3, 4, 1, 4, 4, 3, 6,\dots
\]
and is A000119 in OEIS. This sequence and its various properties have been well-studied over the years \cite{bicknell1999number,carlitz1968fibonacci,chow2021fibonacci,ferns1965representation,klarner1966representations,stockmeyer2008smooth}. 

We also note that there is a connection between representations of an integer as a sum of Fibonacci numbers and representations as a sum of powers of $\phi=\frac{1+\sqrt{5}}{2}$ (see \cite{dekking2024counting}).

In this paper, we are interested in studying representations by sums and differences of distinctly indexed  Fibonacci numbers\footnote{So, unlike with $R(N)$, we do permit both $F_1$ and $F_2$ to appear in the same representation.}, that is, 
\[
n= \epsilon_1 F_{i_1}+ \epsilon_2F_{i_2}+\dots+\epsilon_r F_{i_r}, \qquad \epsilon_j\in \{-1,1\}\text{ for }1\le j\le r,
\]
with $i_j>i_{j+1}$ for $1\le j \le r-1$ and $i_r\ge 1$.  Results regarding such representations have not been as common in the literature. Alpert \cite{alpert2009differences} showed that each integer has a unique far-difference representation, defined as a sum and difference of Fibonacci numbers where every two terms of the same sign are at least 4 apart in index and every two terms of opposite sign are at least 3 apart in index. Bunder \cite{bunder1992zeckendorf} showed that each integer has a representation as a sum of non-consecutive negative-indexed Fibonacci numbers. Several other unique representations using sums and differences of Fibonacci numbers were studied by Hajnal \cite{hajnal2023short}.

Our goal is to count the number of ways to represent $n$ as a sum and difference of distinctly indexed Fibonacci numbers, similar to the authors' prior work on non-standard binary representations \cite{anders2024non} and non-standard quaternary representations \cite{anders2025non}. Unless we impose further restrictions, this count is always infinite, as 
\[
0=F_3-F_2-F_1=F_4-F_3-F_2=F_5-F_4-F_3=\cdots
\]
by the fundamental Fibonacci recurrence $F_i=F_{i-1}+F_{i-2}$. We therefore seek to count how many representations there are of a given length.

To be more precise with our terminology, we consider a digit set $\mathcal{D}=\{1,0,T\}$, where $T=-1$. The notation
\[
[a_{k} \ a_{k-1}\ \dots \ a_2\ a_1]_F, \qquad a_i \in \mathcal{D}\text{ for }1\le i \le k
\]
denotes the sum 
\[
\sum_{i=1}^{k} a_i F_i.
\]
We let $k$ refer to the length of this representation, even if $a_k=0$. Given a sequence $(b_1 \ b_2 \ \dots \ b_r)$ with $b_i\in \mathcal{D}$ for $1\le i \le r$, we say a representation $[a_k \ a_{k-1}\ \dots \ a_2\ a_1]_F$ starts with this sequence if $a_{k-i+1}=b_i$ for $1\le i \le r$. Note that this does require that $k \ge r$.

Let $B(n;k)$ denote the number of representations of $n$ of length $k$. For example, $B(1;3)=4$, since we can represent $1$ as $[0 \ 0 \ 1]_F=F_1$, $[0 \ 1 \ 0]_F=F_2$, $[1\ T \ 0 ]_F = F_3-F_2$, and $[1\ 0 \ T]_F=F_3-F_1$.

\begin{theorem}\label{thm:case 0}
    We have that $B(0;k)$ satisfies a Tribonacci-like recurrence relation:
    \[
    B(0;k+1)=B(0;k)+B(0;k-1)+B(0;k-2),\qquad k\ge 3,
    \]
    with $B(0;1)=1$, $B(0;2)=3$, and $B(0;3)=5$.
\end{theorem}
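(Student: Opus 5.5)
The plan is to condition on the leading digit $a_k$ and reduce everything to $B(0;\cdot)$ and one auxiliary family of counts. Throughout I use the bound $\sum_{i=1}^{j}F_i=F_{j+2}-1$: a representation of length $j$ takes only values in $[-(F_{j+2}-1),\,F_{j+2}-1]$. I also use the symmetry $B(n;k)=B(-n;k)$, obtained by negating every digit. The base values $B(0;1)=1$, $B(0;2)=3$, $B(0;3)=5$ come from direct enumeration:
\[
[0],\qquad [0\ 0],\ [1\ T],\ [T\ 1],\qquad [0\ 0\ 0],\ [0\ 1\ T],\ [0\ T\ 1],\ [1\ T\ T],\ [T\ 1\ 1].
\]

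\emph{Step 1 (leading digit of a representation of $0$).} For $k\ge 2$, split a representation of $0$ of length $k$ by its leading digit.
\begin{itemize}
\item If $a_k=0$, the rest is a representation of $0$ of length $k-1$, giving $B(0;k-1)$ choices.
\item If $a_k=\pm1$, the rest represents $\mp F_k$ with length $k-1$.
\end{itemize}
Setting $C(k)=B(F_k;k-1)$ and using the symmetry, this gives
\[
B(0;k)=B(0;k-1)+2C(k).
\]

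\emph{Step 2 (representing $F_k$ with length $k-1$).} In a representation of $F_k$ of length $k-1$, the lower $k-2$ digits contribute at most $F_k-1$. Hence $a_{k-1}=1$ is forced, and the remaining $k-2$ digits must represent $F_k-F_{k-1}=F_{k-2}$. So
\[
C(k)=D(k-2),\qquad D(j):=B(F_j;j).
\]

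\emph{Step 3 (representing $F_j$ with length $j$).} Condition on $a_j$.
\begin{itemize}
\item If $a_j=0$, the rest represents $F_j$ with length $j-1$, contributing $C(j)$.
\item If $a_j=1$, the rest represents $0$ with length $j-1$, contributing $B(0;j-1)$.
\item If $a_j=T$, the rest would have to represent $2F_j$ with length $j-1$. This is impossible, since $F_{j+1}-1<2F_j$.
\end{itemize}
Thus $D(j)=C(j)+B(0;j-1)$, and combining with Step 2,
\[
C(k)=C(k-2)+B(0;k-3).
\]

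\emph{Step 4 (assembly).} Substitute into Step 1, and use Step 1 again at index $k-2$ in the form $2C(k-2)=B(0;k-2)-B(0;k-3)$:
\[
B(0;k)-B(0;k-1)=2C(k-2)+2B(0;k-3)=B(0;k-2)+B(0;k-3).
\]
This is the claimed recurrence with $k$ replaced by $k+1$.

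The main obstacle is not the algebra but the index bookkeeping at small lengths. Each step requires the relevant lengths to be at least $1$; where a length $0$ appears, the empty representation should be read as the single representation of $0$, with $F_0=0$. One must also check that the strict inequalities behind the forced digits survive when $F_1=F_2$. Concretely, I would verify Steps 2 and 3 for $j\ge 1$ and $k\ge 3$ directly, for instance $C(3)=B(2;2)=1$ and $D(1)=B(1;1)=1$. This confirms the recurrence holds for every $k\ge 3$, that is from $B(0;4)$ onward, whose value is $9=5+3+1$.
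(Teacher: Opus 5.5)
Your proof is correct. For the range you need ($k\ge 4$ in your indexing), Step 1 is used only at $k$ and at $k-2\ge 2$, Step 2 only at $k\ge 4$, and Step 3 only at $j=k-2\ge 2$, so the length-$0$ conventions you worry about are never actually invoked.

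Your route differs from the paper's mainly in organization, not in the underlying facts. The paper treats arbitrary $n$ at once by sorting length-$(k+1)$ representations by prefix:
\begin{itemize}
\item $(0)$ is matched with $B(n;k)$;
\item $(1\,T\,0)$, $(T\,1\,0)$, $(1\,T\,T)$ are matched bijectively, via $F_{k+1}-F_k=F_{k-1}$ and $F_{k+1}-F_k-F_{k-1}=0$, with the length-$(k-1)$ representations beginning with $1$, $T$, $0$, giving $B(n;k-1)$ in total;
\item $(T\,1\,1)$ is matched with $B(n;k-2)$;
\item everything else goes into $f(n;k+1)$.
\end{itemize}
Theorem~\ref{thm:case0openings} then uses $\sum_{i\le m}F_i=F_{m+2}-1$ to show $f(0;k+1)=0$.

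Your forced-digit steps are that theorem in disguise. Step 2 excludes $(1\,0)$, $(1\,1)$, $(T\,0)$, $(T\,T)$, and the impossible case of Step 3 excludes $(1\,T\,1)$ and $(T\,1\,T)$. In your indexing, the terms $2C(k-2)$ and $2B(0;k-3)$ count the prefixes $(1\,T\,0),(T\,1\,0)$ and $(1\,T\,T),(T\,1\,1)$, respectively.

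The real difference is the assembly. The paper observes that three prefix classes together biject onto all shorter representations. You instead introduce the auxiliary count $C(k)=B(F_k;k-1)$ and eliminate it by applying Step 1 at a shifted index; the identity $B(0;k-2)=B(0;k-3)+2C(k-2)$ is the algebraic form of that same partition.

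Your version is compact and self-contained for $n=0$, and it records $B(F_k;k-1)=\tfrac12\bigl(B(0;k)-B(0;k-1)\bigr)$ along the way. The paper's bijective version gives the general recurrence of Theorem~\ref{thm:general case} at no extra cost. Your method does not directly extend there, because the digit forcing in Step 2 relies on the margin $F_k-1<F_k$, which is lost for $n\ne 0$. The representations that escape the forcing in that case are exactly those counted by $f(n;k+1)$.
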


The sequence $B(0;k)$ begins with
\[
1, 3, 5, 9, 17, 31, 57, 105, 193, 355, 653, 1201, 2209, 4063, 7473, 13745, 25281, 46499.
\]
In OEIS, the sequence $B(0;k)$ is an offset version of A000213.

\begin{theorem}\label{thm:general case}
    In general, $B(n;k)$ satisfies a modified Tribonacci-like recurrence relation: 
    \[
    B(n;k+1)=B(n;k)+B(n;k-1)+B(n;k-2)+f(n;k+1),\qquad k\ge 3,
    \]
    where $f(n;k+1)$ is the number of representations of $n$ of length $k+1$ starting with $(1 \ 0)$, $(1 \ 1)$, $(T\ 0)$, $(T \ T)$, $(1 \ T \ 1)$, or $(T \ 1 \ T)$.

    Moreover, there exists a function $f(n)$ such that for any fixed $n$, $f(n;k)=f(n)$ for all sufficiently large $k$ (where the size may depend on the choice of $n$).
\end{theorem}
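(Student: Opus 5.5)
The plan is to classify the representations of $n$ of length $k+1$ by their leading digits. Those counted by $f(n;k+1)$ are set aside, and the rest are matched bijectively with representations of length $k$, $k-1$ and $k-2$. For the ``moreover'' part, I will show that each prefix class in $f$ forces the tail to be almost extremal, so that its count depends only on a fixed deficiency.

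\textbf{Step 1 (leading $0$).} A representation $[0\ a_k \dots a_1]_F$ of length $k+1$ is the same as a representation $[a_k\dots a_1]_F$ of length $k$. So these contribute exactly $B(n;k)$.

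\textbf{Step 2 (prefixes $(1\ T)$ and $(T\ 1)$).} Since $F_{k+1}-F_k=F_{k-1}$, we have
\[
[1\ T\ x\ a_{k-2}\dots a_1]_F=[(x+1)\ a_{k-2}\dots a_1]_F,
\]
the right side being of length $k-1$, valid whenever $x+1\in\mathcal D$, that is, $x\in\{0,T\}$. Likewise
\[
[T\ 1\ x\ \dots]_F=[(x-1)\ \dots]_F
\]
for $x\in\{0,1\}$. The four cases give bijections as follows:
\begin{itemize}
\item $(1\ T\ 0)$ corresponds to length-$(k-1)$ representations starting with $1$;
\item $(T\ 1\ 0)$ corresponds to those starting with $T$;
\item $(1\ T\ T)$ corresponds to those starting with $0$;
\item $(T\ 1\ 1)$ also corresponds to those starting with $0$.
\end{itemize}
The first three classes together give $B(n;k-1)$. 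The fourth gives the length-$(k-1)$ representations starting with $0$, which by Step 1 number $B(n;k-2)$. The excluded cases $x=1$ after $(1\ T)$ and $x=T$ after $(T\ 1)$ are exactly $(1\ T\ 1)$ and $(T\ 1\ T)$. The remaining two-digit prefixes are $(1\ 0)$, $(1\ 1)$, $(T\ 0)$ and $(T\ T)$. Hence every length-$(k+1)$ representation is counted exactly once, which gives the recurrence. Here $k\ge 3$ ensures the length-$(k-2)$ objects make sense.

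\textbf{Step 3 (stabilization of $f$).} I will use the identity $\sum_{i=1}^m F_i=F_{m+2}-1=:M_m$. Write a tail of length $m$ as $\sum a_iF_i=-M_m+\sum e_iF_i$ with $e_i=a_i+1\in\{0,1,2\}$. Then the tails with value $-M_m+d$ are counted by
\[
g_m(d)=\#\Bigl\{(e_i)_{i\le m}\in\{0,1,2\}^m:\ \textstyle\sum e_iF_i=d\Bigr\}.
\]
For fixed $d$, once $F_m>d$ every $e_i$ with $F_i>d$ must vanish. So $g_m(d)=g(d)$ is eventually constant, and it equals $0$ for $d<0$. Symmetrically, tails of value $M_m-d$ are also counted by $g_m(d)$.

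Now I compute the deficiency for each prefix class.
\begin{itemize}
\item For $(1\ 0)$: the tail has length $k-1$ and value $n-F_{k+1}=-M_{k-1}+(n-1)$, so the count is $g_{k-1}(n-1)$.
\item For $(1\ 1)$: the deficiency is $n-1-F_k$, which is negative for large $k$.
\item For $(1\ T\ 1)$: the tail has length $k-2$ and value $n-2F_{k-1}$, so the deficiency is $n-1-F_{k-3}$, again negative eventually.
\item For $(T\ 0)$, $(T\ T)$ and $(T\ 1\ T)$: by the symmetric argument (negate everything), the counts are $g_{k-1}(-n-1)$, then $0$, then $0$ for large $k$.
\end{itemize}
Thus
\[
f(n;k+1)=g(n-1)+g(-n-1)\quad\text{for all sufficiently large } k,
\]
which defines $f(n)$. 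For $n=0$ both arguments are $-1$, so $f(0;k)=0$ and Theorem \ref{thm:case 0} follows.

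I expect the main obstacle to be the bookkeeping in Step 2. One must check that the four bijections really partition the non-$f$ classes and that the double appearance of ``length-$(k-1)$ representations starting with $0$'' is what yields the $B(n;k-2)$ term. The computation in Step 3 is routine once the identity for $M_m$ is in hand.
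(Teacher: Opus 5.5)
Your proof is correct. The recurrence half is essentially the paper's argument: the same prefix partition and the same bijections. The only cosmetic difference is that you send $(T\ 1\ 1)$ to length-$(k-1)$ representations beginning with $0$ and then invoke Step 1, whereas the paper deletes the three leading digits directly.

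The stabilization half takes a genuinely different route.
\begin{itemize}
\item \textbf{The paper's route.} It shows that the classes $(1\ 1)$, $(1\ 0\ 0)$, $(1\ 0\ 1)$, $(1\ T\ 1)$ and their negatives are eventually empty, so for large length only $(1\ 0\ T)$ and $(T\ 0\ 1)$ survive. It then uses $F_{k+1}-F_{k-1}=F_k$ to biject length-$(k+1)$ representations beginning with $(1\ 0\ T)$ onto length-$k$ representations beginning with $(1\ 0)$. This gives $f(n;k+1)=f(n;k)$ without ever identifying the limiting value.
\item \textbf{Your route.} The substitution $e_i=a_i+1$ turns each prefix class into a count $g_m(d)$ of $\{0,1,2\}$-digit Fibonacci representations of an explicit deficiency $d$. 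Nonnegativity makes stabilization in $m$ immediate. You also get an exact expression for $f(n;k+1)$ for every $k\ge 3$, and the closed description $f(0)=0$, $f(n)=g(|n|-1)$ for $n\ne 0$.
\end{itemize}
What your route buys is an identification of the sequence the paper calls mysterious; it reproduces the listed values $1,2,4,5,7,9,11,13,\dots$. It also gives an explicit threshold: $F_{k-3}\ge |n|$ suffices for $f(n;k+1)=f(n)$. The paper's route is shorter and stays inside the prefix-bijection framework, but says nothing about what $f(n)$ actually is.

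One minor precision point concerns your closing remark about $n=0$. That remark needs $f(0;k+1)=0$ for every $k\ge 3$, not just for large $k$. It does follow from your exact deficiencies, which for $n=0$ are $-1$, $-1-F_k$ and $-1-F_{k-3}$ (with $F_0=0$), all negative for every $k\ge 3$. The stabilized formula $g(n-1)+g(-n-1)$ by itself only gives it eventually, and the initial values $B(0;1)$, $B(0;2)$, $B(0;3)$ still have to be checked separately.
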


We include the beginning of some of these sequences below:
\begin{align*}
(B(1;k))_{k=1}^\infty &= (1,2,4,9,16, 30,56,\dots),\\
(B(2;k))_{k=1}^\infty &= (0,1,4,8, 16, 31, 57, \dots),\\
(B(3;k))_{k=1}^\infty &= (0,0, 2, 7, 15, 30, 57, \dots).
\end{align*}
None of these sequences appear in OEIS.

The sequence $f(n)$ is given by 
\[
(f(n))_{n=0}^\infty = (0,1,2,4,5,7, 9,11, 13, 15, 17, 20,21, 24, 26, 29, \dots),
\]
which also does not appear in OEIS.

This paper is organized as follows. In Section \ref{sec:reps of zero}, we begin by exploring representations of $n=0$. In Section \ref{sec:reps of nonzero}, we examine the general case of representations of $n$ and prove the first half of Theorem \ref{thm:general case}. When combined with the work of Section \ref{sec:reps of zero}, this provides a proof of Theorem \ref{thm:case 0}. Finally, in Section \ref{sec: exploring the f function}, we study the function $f(n;k)$ and prove the second half of Theorem \ref{thm:general case}.

In the proofs of these theorems, we rely on the known equality
\begin{equation}\label{eq:FibonacciSum}
\sum_{i=1}^mF_i=F_{m+2}-1.
\end{equation}
\section{Representations of Zero}\label{sec:reps of zero}

We begin with the case $n=0$, as it is simpler and instructive for the general case.

\begin{example}

    Here we explore the case $k=5$.  Consider a representation $[a_5 \ a_4 \ a_3 \ a_2 \ a_1]_F$ of $n=0$ counted by $B(0;5)$.  Suppose such a representation begins with $(1 \ 0)$.  Then this representation is minimized by letting $a_3=a_2=a_1=T$, which gives
    \[
    F_5+0\cdot F_4-F_3-F_2-F_1=5-2-1-1=5-4=1>0
    \]
    and thus is still too large to be a representation of $0$.
    Because a representation counted by $B(0;5)$ beginning with $(1 \ 1)$ yields a number even larger than one arising from a representation beginning with $(1 \ 0)$, we see that we also cannot represent $n=0$ with $a_5=1$ and $a_4=1$.  Thus any representation of $n=0$ beginning with $a_5=1$ must have $a_4=T$.

    Now suppose we have a representation of $n=0$ that is counted by $B(0;5)$ and starts with $(T \ 0)$ or $(T \ T)$.  Multiplying every coefficient in the representation by $T$ gives a representation of $n=0$ starting with $(1 \ 0)$ or with $(1 \ 1)$, which is a contradiction to our conclusion in the paragraph above.  Thus any representation of $n=0$ beginning with $a_5=T$ must have $a_4=1$.

    Lastly, suppose we have a representation of $n=0$ that is counted by $B(0;5)$ and begins with $(1 \ T \ 1)$.  Then the first part of the representation is $F_5-F_4+F_3=2F_3$.  Letting $a_2=a_1=T$, the latter part of the representation is $(-1)\sum\displaystyle_{i=1}^2 F_i=(-1)\left(F_4-1\right)=-F_4+1$.  Combining the first and latter parts, the entire representation of $n=0$ is $2F_3-F_4+1=F_3-F_2+1>0$.  Thus a representation of length $5$ for $n=0$ cannot begin with $(1 \ T \ 1)$.  By an argument similar to the contradiction in the paragraph above, a representation of length 5 for $n=0$ also cannot begin with $(T \ 1 \ T)$.

In summary, a representation counted by $B(0;5)$ cannot begin with $(1 \ 0)$, $(1 \ 1)$, $(T \ 0)$, $(T \ T)$, $(1 \ T \ 1)$, nor $(T \ 1 \ T)$.

\end{example}

Similar arguments hold for general $k\geq3$ when $n=0$.

\begin{theorem}\label{thm:case0openings}
Any representation of $n=0$ of length $k\ge 3$ cannot begin with $(1 \ 0)$, $(1 \ 1)$, $(T \ 0)$, $(T \ T)$, $(1 \ T \ 1)$, nor $(T \ 1 \ T)$.
\end{theorem}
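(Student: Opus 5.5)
The plan is to generalize the example for $k=5$. We bound the value of any representation that starts with a forbidden opening, using the identity \eqref{eq:FibonacciSum}. The argument has three steps.

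\textbf{Reduction by symmetry.} Multiplying every digit by $T$ sends a representation of $0$ to another representation of $0$. It also swaps the openings $(1\ 0)\leftrightarrow(T\ 0)$, $(1\ 1)\leftrightarrow(T\ T)$ and $(1\ T\ 1)\leftrightarrow(T\ 1\ T)$. So it suffices to rule out the three openings $(1\ 0)$, $(1\ 1)$ and $(1\ T\ 1)$.

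\textbf{Openings $(1\ 0)$ and $(1\ 1)$.} Suppose $a_k=1$ and $a_{k-1}\in\{0,1\}$. The remaining digits are at least $T$, so by \eqref{eq:FibonacciSum}
\[
\sum_{i=1}^k a_iF_i \;\ge\; F_k - \sum_{i=1}^{k-2}F_i \;=\; F_k-(F_k-1) \;=\; 1 \;>\;0 .
\]
This uses only $k\ge 3$, so that $k-2\ge 1$; for $k=3$ the bound reads $F_3-F_1=1$. The case $a_{k-1}=1$ gives an even larger value.

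\textbf{Opening $(1\ T\ 1)$.} This case needs $k\ge 3$. If $k=3$ the representation is exactly $[1\ T\ 1]_F=F_3-F_2+F_1=2\neq 0$. For $k\ge 4$ the leading part is
\[
F_k-F_{k-1}+F_{k-2}=2F_{k-2}.
\]
The remaining digits are at least $T$, so the tail is at least $-\sum_{i=1}^{k-3}F_i=-(F_{k-1}-1)$. Hence the total is at least
\[
2F_{k-2}-F_{k-1}+1 \;=\; F_{k-2}-F_{k-3}+1 \;=\; F_{k-4}+1 \;>\;0 .
\]
Here we use the convention $F_0=0$ when $k=4$. Alternatively, for $k=4$ one can check directly that $2F_2-F_3+1=1>0$.

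The only delicate point is the boundary: the sum $\sum_{i=1}^{k-3}F_i$ is empty when $k=3$, and the recurrence is used at small indices, so the small cases $k=3,4$ should be checked by hand as above. The general inequalities are routine applications of \eqref{eq:FibonacciSum} and the Fibonacci recurrence. Combined with the symmetry reduction, this proves Theorem \ref{thm:case0openings}.
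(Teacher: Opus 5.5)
Your proof is correct and follows essentially the same route as the paper: you bound the value from below by setting all remaining digits to $T$ and applying \eqref{eq:FibonacciSum}, and you handle the $T$-led openings by negating every digit. Your explicit check of the small cases $k=3,4$ for the opening $(1\ T\ 1)$ is a minor improvement, since the paper's step $2F_{k-2}-(F_{k-1}-1)=F_{k-2}-F_{k-3}+1$ silently uses $F_0=0$ when $k=3$.
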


\begin{proof}
    Suppose a representation of length $k$ starts with $(1 \ 0)$.  Then this representation is minimized by letting $a_{k-2}=a_{k-3}=\cdots=a_1=T$, which gives
    \[
    F_k+0\cdot F_{k-1}-\sum_{i=1}^{k-2} F_i=F_k-F_k+1=1>0,
    \]
    where we have used Equation \eqref{eq:FibonacciSum}. Thus this or any other representation starting with $(1\ 0)$ is too large to be a representation of $n=0$.
    Because a representation beginning with $(1 \ 1)$  yields a number even larger than one arising from a representation beginning with $(1 \ 0)$, we see that we also cannot represent $0$ starting with $(1 \ 1)$. 

    Now suppose a representation of length $k$ for $n=0$ starts with $(T \ 0)$ or with $(T \ T)$.  Multiplying every coefficient in this representation by $T$ gives a length $k$ representation of $n=0$ starting with $(1 \ 0)$ or with $(1 \ 1)$, the existence of which is a contradiction to our conclusion in the paragraph above. 

    Lastly, suppose we have a length $k$ representation beginning with $(1 \ T \ 1 )$. Then the first part of the representation is $F_k-F_{k-1}+F_{k-2}=2F_{k-2}$. Minimizing the rest of the representation by letting $a_{k-3}=a_{k-4}=\cdots=a_1=T$, we have
    \[
    F_k-F_{k-1}+F_{k-2}-\sum_{i=1}^{k-3} F_i = 2F_{k-2}-(F_{k-1}-1)=F_{k-2}-F_{k-3}+1>0,
    \]
    which means any representation starting with $(1 \ T \ 1)$ is too large to be a representation of $n=0$.  Thus a representation of $n=0$ cannot begin with $(1 \ T \ 1)$. By an argument similar to the contradiction in the paragraph above, a representation of $n=0$ also cannot begin with $(T \ 1 \ T)$. This completes the proof.
\end{proof}
It is an immediate consequence of the above to see that any representation of $0$ of length $k\ge 3$ must begin with $(1 \ T \ 0)$, $(1 \ T \ T)$, $(T \ 1 \ 0)$, $(T \ 1\ 1 )$, or $(0)$.

We can show by hand that $B(0;1)$ counts the singular representation $[0]_F$; $B(0;2)$ counts the three representations $[0\ 0]_F$, $[1 \ T]_F$, and $[T \ 1]_F$; and $B(0;3)$ counts the five representations $[0\ 0 \ 0]_F$, $[0 \ 1 \ T]_F$, $[0 \ T \ 1]_F$, $[1 \ T \ T]_F$, and $[T\ 1 \ 1]_F$.

When these facts are combined with Theorems \ref{thm:general case} and \ref{thm:case0openings}, this section provides a proof of Theorem \ref{thm:case 0}.

\section{Representations of a Nonzero Integer $n$}\label{sec:reps of nonzero}

In this section, we prove the first part of Theorem \ref{thm:general case}. In particular, we show that $B(n;k)$ satisfies a modified Tribonacci-like recurrence relation: 
    \[
    B(n;k+1)=B(n;k)+B(n;k-1)+B(n;k-2)+f(n;k+1),\qquad k\ge 3,
    \]
    where $f(n;k+1)$ is the number of representations of $n$ of length $k+1$ starting with $(1 \ 0)$, $(1 \ 1)$, $(T\ 0)$, $(T \ T)$, $(1 \ T \ 1)$, or $(T \ 1 \ T)$.

\begin{proof}
Let $n$ be a nonzero integer and $k+1$ be an integer greater than or equal to $4$.  Consider a representation of $n$ of length $k+1$ starting with $(0)$.  Such a representation is of the form $n=[0 \ a_{k} \ a_{k-1} \ \dots \ a_2 \ a_1]_F$ and corresponds to a representation of $n$ using only $k$ Fibonacci numbers, namely $n=[a_{k}\ a_{k-1} \ \dots \ a_2\ a_1]_F$. This is clearly a bijection between representations of $n$ of length $k+1$ starting with $(0)$ and representations of $n$ of length $k$, which are counted by $B(n;k)$. 

Next consider a representation of $n$ of length $k+1$ starting with $(1 \ T \ 0)$. Such a representation is of the form $n=[1\ T \ 0 \ a_{k-2} \ a_{k-3} \ \dots \ a_2 \ a_1]_F$ and, by the fundamental Fibonacci recurrence $F_{k+1}-F_{k}=F_{k-1}$, corresponds to a representation of $n$ using $k-1$ digits and starting with $(1)$, namely $n=[1\ a_{k-2} \ a_{k-3} \ \dots \ a_2 \ a_1]_F$. Again, it is clear this is a bijection between representations of $n$ of length $k+1$ starting with $(1 \ T \ 0)$ and representations of $n$ of length $k-1$ starting with $(1)$. 

By repeating the argument of the previous paragraph with the roles of $T$ and $1$ reversed, we see that representations of $n$ of length $k+1$ starting with $(T\ 1\ 0)$ are in correspondence with representations of $n$ of length $k-1$ starting with $(T)$.

Lastly, for now, consider a representation of $n$ of length $k+1$ beginning with $(1 \ T \ T)$, $n=[1 \ T \ T\ a_{k-2} \ a_{k-3} \ \dots \ a_2 \ a_1]_F$. Since $F_{k+1}-F_{k}-F_{k-1}=0$, this representation corresponds to a representation of $n$ using $k-1$ digits and starting with $(0)$, namely $n=[0\ a_{k-2} \ a_{k-3} \ \dots \ a_2 \ a_1]_F$. Thus representations of $n$ of length $k+1$ starting with $(1 \ T \ T)$ are in a bijective correspondence with representations of $n$ of length $k-1$ starting with $(0)$.

The previous three paragraphs show that representations of $n$ of length $k+1$ starting with $(1 \ T \ 0)$, $(T\ 1 \ 0)$, or $(1 \ T \ T)$ are in bijection with representations of $n$ of length $k-1$ starting with $(1)$, $(T)$, or $(0)$, respectively, but these three categories form a partition of the set of all representations of $n$ of length $k-1$. Thus, the number of representations of $n$ of length $k+1$ starting with $(1 \ T \ 0)$, $(T\ 1 \ 0)$, or $(1 \ T \ T)$ is equal to $B(n;k-1)$. 

Finally, consider a representation of $n$ of length $k+1$ beginning with $(T \ 1 \ 1)$.  Such a representation is of the form $n=[T \ 1 \ 1\ a_{k-2} \ a_{k-3} \ \dots \ a_2 \ a_1]_F$. Again applying the Fibonacci recurrence $- F_{k+1}+F_{k}+F_{k-1}=0$, there is a corresponding representation of $n$ using the remaining $k-2$ digits of this representation, namely $n=[a_{k-2} \ a_{k-3} \ \dots \ a_2 \ a_1]_F$. As before, this is a bijection, and so the number of representations of $n$ of length $k+1$ beginning with $(T \ 1 \ 1)$ is equal to $B(n;k-2)$. 

Thus the number of representations of $n$ of length $k+1$ starting with $(0)$, $(1 \ T \ 0)$, $(T \ 1 \ 0)$, $(1 \ T \ T)$, or $(T\ 1 \ 1)$  is equal to $B(n;k)+B(n;k-1)+B(n;k-2)$.
Therefore, we have that 
    \[
    B(n;k+1)=B(n;k)+B(n;k-1)+B(n;k-2)+f(n;k+1),\ k\ge 3,
    \]
    where $f(n;k+1)$ counts all representations of $n$ of length $k+1$ that do \emph{not} begin with $(0)$, $(1 \ T \ 0)$, $(T \ 1 \ 0)$, $(1 \ T \ T)$, or $(T\ 1 \ 1)$. It is simple to check that this corresponds to the function $f(n;k+1)$ as it was written in the statement of the theorem.
\end{proof}

\section{The Mysterious Sequence $f(n;k)$}\label{sec: exploring the f function}

In this section, we will prove the second half of Theorem \ref{thm:general case}, that for any fixed $n$, there is a function $f(n)$ such that $f(n;k)=f(n)$ for all sufficiently large $k$. Recall that $f(n;k+1)$ is the number of representations of $n$ of length $k+1$ starting with $(1 \ 0)$, $(1 \ 1)$, $(T\ 0)$, $(T \ T)$, $(1 \ T \ 1)$, or $(T \ 1 \ T)$.

If a representation of $n$ using $k+1$ digits starts with $(1 \ 1)$, then even if $a_i=T$ for all other $i$, we still get
\[
F_{k+1}+F_{k}-\sum_{i=1}^{k-1}F_i=F_{k+1}+F_{k}-(F_{k+1}-1)=F_{k}+1.
\]
This is larger than $n$ once $k+1$ is sufficiently large, so $f(n;k+1)$ will not count any such representation once $k+1$ is large enough.

If a representation of $n$ of length $k+1$ starts with $( 1 \ 0 \ 0)$, $(1 \ 0 \ 1)$, or $(1 \ T \ 1)$, then even if all other digits $a_i=T$, we still get at least
\begin{align*}
F_{k+1}-F_k+F_{k-1}-\sum_{i=1}^{k-2}F_i&=F_{k+1}-F_k+F_{k-1}-(F_{k}-1)=(F_{k+1}-F_{k})- (F_k-F_{k-1})+1\\
&=F_{k-1}-F_{k-2}+1=F_{k-3}+1.
\end{align*}
Again, this is larger than $n$ for $k+1$ sufficiently large, so $f(n;k+1)$ will not count any such representation once $k+1$ is large enough. 

By applying the arguments of the previous two paragraphs after multiplying everything by $T$, we see that $f(n;k+1)$ will not count any representation starting with $(T \ T)$, $(T \ 0  \ 0)$, $(T\ 0 \ T)$, nor $(T \ 1 \ T)$ once $k+1$ is large enough.

If $f(n;k+1)$ is the sequence defined in the introduction, then $f(n;k+1)$ is by definition the number of representations of $n$ using the first $k+1$ Fibonacci numbers, where the representation must start with $(1 \ 0), (1 \ 1), (T \ 0), (T \ T), (1 \ T \ 1)$, or $(T \ 1 \ T)$. Thus, for $k+1$ large enough, $f(n;k+1)$ will only be counting representations starting with $(1 \ 0 \ T)$ or with $(T \ 0\ 1)$ by the arguments above. Define $K$ so that this ``large enough'' behavior occurs once $k+1\geq K$.

Suppose we have a representation of $n$ of length $k+1\ge K+1$ (note the larger bound) that begins with $(1 \ 0 \ T)$, say $n=[1 \ 0 \ T\ a_{k-2} \ a_{k-3} \ \dots \ a_2 \ a_1]_F$. Since $F_{k+1}-F_{k-1}=F_k$, we have that each such representation corresponds bijectively to a representation of $n$ of length $k\ge K$ beginning with $(1\ 0)$, namely, $n=[1 \ 0\ a_{k-2} \ a_{k-3} \ \dots \ a_2 \ a_1]_F$. However, by our assumption on the size of $k$, this tells us $a_{k-2}=T$ as well. A similar argument will show that all representations of $n$ of length $k+1\ge K+1$ that begin with $(T \ 0 \ 1)$ correspond bijectively to all representations of $n$ of length $k$ that begin with $(T \ 0 \ 1)$. Since these are the only representations counted by $f(n;k+1)$ and $f(n;k)$, we see that $f(n;k+1)=f(n;k)$, as desired.

\section{Acknowledgments}
The authors appreciate the support of the Department of Mathematics at the University of Texas at Tyler.  

The authors report there are no competing interests to declare.
\bibliographystyle{plain}
\bibliography{bibliography}

@article{ferns1965representation,
  title={On the representation of integers as sums of distinct {F}ibonacci numbers},
  author={Ferns, H. H.},
  journal={The Fibonacci Quarterly},
  volume={3},
  number={1},
  pages={21--30},
  year={1965},
  publisher={Taylor \& Francis}
}

@article{bicknell1999number,
  title={The number of representations of ${N}$ using distinct {F}ibonacci numbers, counted by recursive formulas},
  author={Bicknell-Johnson, Marjorie and Fielder, Daniel C.},
  journal={The Fibonacci Quarterly},
  volume={37},
  number={1},
  pages={47--60},
  year={1999},
  publisher={Taylor \& Francis}
}

@article{stockmeyer2008smooth,
  title={A smooth tight upper bound for the {F}ibonacci representation function ${R} (N)$},
  author={Stockmeyer, Paul K.},
  journal={The Fibonacci Quarterly},
  volume={46},
  number={2},
  pages={103--106},
  year={2008},
  publisher={Taylor \& Francis}
}

@article{klarner1966representations,
  title={Representations of ${N}$ as a sum of distinct elements from special sequences},
  author={Klarner, David A.},
  journal={The Fibonacci Quarterly},
  volume={4},
  number={4},
  pages={289--306},
  year={1966},
  publisher={Taylor \& Francis}
}

@article{dekking2024counting,
  title={Counting Base Phi Representations},
  author={Dekking, Michel and Loon, Ad van},
  journal={The Fibonacci Quarterly},
  volume={62},
  number={2},
  pages={112--124},
  year={2024},
  publisher={Taylor \& Francis}
}

@article{carlitz1968fibonacci,
  title={Fibonacci representations},
  author={Carlitz, L.},
  journal={The Fibonacci Quarterly},
  volume={6},
  number={4},
  pages={193--220},
  year={1968},
  publisher={Taylor \& Francis}
}

@article{chow2021fibonacci,
  title={On {F}ibonacci partitions},
  author={Chow, Sam and Slattery, Tom},
  journal={Journal of Number Theory},
  volume={225},
  pages={310--326},
  year={2021},
  publisher={Elsevier}
}

@article{alpert2009differences,
  title={Differences of multiple {F}ibonacci numbers.},
  author={Alpert, Hannah},
  journal={Integers},
  volume={9},
  number={6},
  pages={745--749},
  year={2009},
  publisher={Walter de Gruyter}
}

@article{bunder1992zeckendorf,
  title={{Z}eckendorf representations using negative {F}ibonacci numbers},
  author={Bunder, Martin W.},
  journal={The Fibonacci Quarterly},
  volume={30},
  number={2},
  pages={111--115},
  year={1992},
  publisher={Taylor \& Francis}
}

@article{hajnal2023short,
  title={A short note on {Z}eckendorf type numeration systems with negative digits allowed.},
  author={Hajnal, P{\'e}ter},
  journal={Bull. ICA},
  volume={97},
  pages={54--66},
  year={2023}
}

@article{zeckendorf1972representations,
  title={Representations des nombres naturels par une somme de nombres de {F}ibonacci on de nombres de {L}ucas},
  author={Zeckendorf, {\'E}douard},
  journal={Bulletin de La Society Royale des Sciences de Liege},
  pages={179--182},
  year={1972}
}

@article{anders2024non,
  title={Non-Standard Binary Representations and the {S}tern Sequence},
  author={Anders, Katie and Dawsey, Madeline Locus and Gupta, Rajat and Vandehey, Joseph},
  journal={The Electronic Journal of Combinatorics},
  pages={P4--39},
  year={2024}
}

@article{anders2025non,
  title={Non-standard quaternary representations and the {F}ibonacci numbers},
  author={Anders, Katie and Dawsey, Madeline L. and Gupta, Rajat and Lebowitz-Lockard, Noah and Vandehey, Joseph},
  journal={arXiv preprint arXiv:2505.04589},
  year={2025}
}

\end{document}